\def \version {22nd November, 2013}

\documentclass[a4paper,12pt]{article}
\usepackage{array,amsfonts,amsmath,graphicx,mathrsfs,amssymb,amsthm,latexsym}
\usepackage[latin1]{inputenc}
\textheight 18.8cm
\textwidth 15cm
 \voffset -0.2cm
 \hoffset -.4cm

\newtheorem{thm}{Theorem}[section]
\newtheorem{lemma}[thm]{Lemma}

\def \cH {{\cal H}}

\def \cP {{\cal P}}

\begin{document}
\title{Uniformly resolvable decompositions of $K_v$\\ into $P_3$ and $K_3$ graphs}

\author { Salvatore Milici
\thanks{Supported by MIUR and by C. N. R. (G. N. S. A. G. A.), Italy}\\
\small Dipartimento di Matematica e Informatica \\
\small Universit\`a di Catania \\
\small Catania\\
\small Italia\\
{\small \tt milici@dmi.unict.it} \and Zsolt Tuza
\thanks{also affiliated with the
 Department of Computer Science and Systems Technology,
 University of Pannonia, Veszpr\'em, Hungary. }
\thanks{Supported in part
   by the Hungarian Scientific Research Fund, OTKA grant T-81493,
  and by the Hungarian State and the European Union under the grant
TAMOP-4.2.2.A-11/1/KONV-2012-0072. }\\
\small Alfr\'ed R\'enyi Institute of Mathematics \\
\small Hungarian Academy of Sciences \\
\small Budapest \\
\small Hungary \\
{\small \tt tuza@dcs.uni-pannon.hu} \and }

\date{\small Latest update on \version }
\maketitle

\begin{abstract}
In this paper we consider the uniformly resolvable decompositions
of the complete graph $K_v$, or the complete graph minus a
1-factor as appropriate, into subgraphs such that each
resolution class contains only blocks isomorphic to the same
graph. We completely determine the spectrum for the
 case in which all the resolution
classes are either $P_3$ or $K_3$.
\end{abstract}

\vspace{5 mm}
\noindent AMS classification: $05B05$.\\
Keywords: Resolvable graph decomposition; uniform resolution;
3-path; 3-cycle.

\section{Introduction and Definitions}\label{introduzione}

Given a collection $\cH$ of graphs, an {\em $\cH$-decomposition\/}
of a graph $G$ is a decomposition of the edge set of $G$ into
subgraphs isomorphic to the
 members of $\cH$. The copies of $H\in\cH$ in
the decomposition are called {\em blocks}. Such a decomposition is
called {\em resolvable\/} if it is possible to partition the blocks
into {\em classes\/} $\cP_i$
 (often referred to as {\em parallel classes\/})
 such that every vertex of $G$ appears
in exactly one block of each $\cP_i$.

A resolvable $\cH$-decomposition of $G$ is sometimes also
referred to as an {\em $\cH$-fac\-torization\/} of $G$, and a class can
be called an {\em $\cH$-factor\/} of $G$. The case where
 $\cH=K_2$ (a single edge) is known as a
 {\em 1-factorization\/}; for $G=K_v$
it is well known to exist if and only if $v$ is
even. A single class of a  1-factorization, that is a pairing of
all vertices, is also known as a {\em 1-factor\/}
 or {\em perfect matching}.

In many cases we
wish to place further constraints on the classes. For example, a
class is called {\em uniform} if every block of the class is
isomorphic to the same graph from $\cH$. Of particular note is the
result of Rees \cite{R} which finds necessary and sufficient
conditions for the existence of uniform $\{K_2,
K_3\}$-decompositions of $K_v$. Uniformly resolvable
decompositions of $K_v$ have also been studied in \cite{DQS},
\cite{DLD}, \cite{GM}, \cite{HR}, \cite{KMT}, \cite{M}, \cite{S1} and \cite{S2}.

In this paper we study the existence of uniformly resolvable
decompositions into paths $P_3$ and cycles $K_3\cong C_3$
 (both having three vertices)
for the complete graph $K_v$ and for the complete graph minus a
1-factor, which we denote by $K_v-I$.
The existence of resolvable decompositions for each of $P_3$
and $K_3$ was studied separately already long ago:
\begin{itemize}
 \item
  There exists a resolvable $K_3$-decomposition of $K_v$
  (called {\em Kirkman Triple System}, denoted as KTS$(v)$)
 if and only if \ $v\equiv 3\pmod {6}$.
 \item
  There exists a resolvable $K_3$-decomposition of $K_v-I$
  (called {\em Nearly Kirkman Triple System}, denoted as NKTS$(v)$)
 if and only if \ $v\equiv 0\pmod {6}$ \ and $v > 12$ \ \cite{RS}.
 \item
  There exists a resolvable $P_3$-decomposition of $K_v$
 if and only if \ $v\equiv 9\pmod {12}$ \ \cite{H}.
 \item
  There exists a resolvable $P_3$-decomposition of $K_v-I$
 if and only if \ $v\equiv 6\pmod {12}$.
  (This follows from the case $v=6$ and from the spectrum
  of KTS$(v)$ systems.)
\end{itemize}
Further results on resolvable path decompositions are given in \cite{HR72}.

Let now
\begin{itemize}
 \item $G=K_v$ for $v$ odd,
 \item $G=K_v-I$ for $v$ even,
\end{itemize}
 and let
\begin{quote}
  $URD(v;P_3,K_3)$ := $\{(r,s)$ :
  there exists a uniformly resolvable decomposition of
$G$ into $r$ classes containing only copies of $P_3$ and $s$
classes containing only copies of $K_3\}$.
\end{quote}
%
For $v\geq3$, divisible by 3,
 define $I(v)$ according to the following table, where the first two
 lines are meant for $v\ge 18$ only:

\vspace{4 mm}

 \begin{minipage}[t]{\textwidth}
\begin{center}
\begin{tabular}{|c|c|}
\hline
  $v  $ &  {$I(v)$}
\\
\hline
$0 \pmod{12}$ & $\{(3x, \frac{v-2}{2}-2x), x=0,1,\ldots,\frac{v-4}{4}\} $\\
$6 \pmod{12}$ &  $\{(3x, \frac{v-2}{2}-2x), x=0,1,\ldots,\frac{v-2}{4}\}$\\
$3\pmod{12}$ & $\{(3x, \frac{v-1}{2}-2x), x=0,1,\ldots,\frac{v-3}{4}\}$\\
$9\pmod{12}$ & $\{(3x, \frac{v-1}{2}-2x), x=0,1,\ldots,\frac{v-1}{4}\}$\\
$6$ &  $\{(3,0)\}$\\
$12$ & $\{(3,3), (6,1)\}$\\

\hline
 \end{tabular}

\bigskip

Table 1: The set $I(v)$.

 \end{center}\end{minipage}

\vspace{4 mm}

In this paper we completely solve the spectrum problem for such
 systems; i.e., characterize the existence of uniformly resolvable
decompositions of $K_{v}$ and $K_{v}-I$ into $r$ classes of 3-paths
and $s$ classes of 3-cycles, by proving the following result:\\

\noindent \textbf{Main Theorem.} { \em
 For every integer\/ $v\geq3$, divisible by\/ $3$,
 the set\/ $URD(v;P_3,K_3)$ is identical to the set\/ $I(v)$
  given in Table 1.}

\paragraph{Notation.}

In the constructive parts of the proof we shall use the following notation,
 where $a_1,a_2,a_3$ may mean any three distinct vertices:
\begin{itemize}
 \item $(a_1,a_2,a_3)$ denotes the $3$-cycle $K_3$
  having vertex set $\{a_1,a_2,a_3\}$
and edge set $ \{\{a_1,a_2\}, \{a_2,a_3\},\{a_3,a_1\}\}$;
 \item $(a_1;a_2,a_3)$ denotes the path $P_{3}$
  having vertex set $\{a_1,a_2,a_3\}$
and edge set $ \{\{a_1,a_2\}, \{a_1,a_3\}\}$.
\end{itemize}


\section{Preliminaries and necessary conditions}

 In this section we introduce some useful definitions and give necessary conditions
 for the existence of a uniformly resolvable decomposition of $ K_v$ into
$P_3$ and $K_3$ graphs. For missing terms or results that are not
explicitly explained in the paper, the reader is referred to
\cite{CD} and its online updates. Evidently, for a uniformly
resolvable decomposition of $ K_v$ into $P_3$ and $K_3$ graphs to
exist, $v$ must be a multiple of 3.
 A (resolvable) $\cH$-decomposition
of the complete multipartite graph with $u$ parts each of size $g$
is known as a (resolvable) {\em group divisible design\/} $\cH$-(R)GDD;
the parts of size $g$ are called the groups of the design.  When
$\cH = K_n$, we call it an $n$-(R)GDD. A $3$-RGDD of type
$g^u$ exists if and only if $g(u-1)$ is even and $gu\equiv 0\pmod
3$, except when $(g,u)\in\{(2,6), (2,3), (6,3)\}$ \cite{RS}.
One can see, in particular, that a $3$-RGDD of type $2^u$ is a Nearly
Kirkman Triple System (NKTS($2u$)); we mentioned its spectrum
in the Introduction.

\begin{lemma}
\label{lemmaP1} Let \ $v\equiv 3\pmod{6}$. If\/ $(r,s)\in
URD(v;P_3,K_3)$ then\/ $(r,s)\in I(v)$.
\end{lemma}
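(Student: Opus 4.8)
The plan is to derive the membership $(r,s)\in I(v)$ entirely from counting and nonnegativity, since $v\equiv 3\pmod 6$ forces $v$ to be odd and hence $G=K_v$, a graph in which every vertex has the even degree $v-1$. First I would count edges. By resolvability each parallel class partitions the whole vertex set, so a class of copies of $P_3$ consists of $v/3$ paths, each contributing $2$ edges and hence using $2v/3$ edges, while a class of copies of $K_3$ consists of $v/3$ triangles, each contributing $3$ edges and hence using $v$ edges. As the $r+s$ classes partition the $\binom{v}{2}$ edges of $K_v$, I obtain
\[
 r\cdot\frac{2v}{3}+s\cdot v=\binom{v}{2}=\frac{v(v-1)}{2},
\]
and dividing by $v$ gives the fundamental relation
\[
 \frac{2r}{3}+s=\frac{v-1}{2}.
\]

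The second step is to read off the arithmetic constraints. Because $v$ is odd, $(v-1)/2$ is an integer, and since $s$ is a nonnegative integer the quantity $2r/3$ must be an integer as well; as $\gcd(2,3)=1$, this forces $3\mid r$. Writing $r=3x$ with $x$ a nonnegative integer, the relation becomes $s=\frac{v-1}{2}-2x$, which is precisely the form of the pairs listed in the $3\pmod{12}$ and $9\pmod{12}$ rows of Table~1. It then remains only to pin down the range of $x$: the condition $r\ge 0$ gives $x\ge 0$, while $s\ge 0$ gives $x\le (v-1)/4$.

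The final step is to take integrality of $x$ into account, which is exactly where the split between the two residue classes modulo $12$ appears. If $v\equiv 9\pmod{12}$ then $(v-1)/4$ is itself an integer, so $0\le x\le (v-1)/4$, matching the corresponding row. If $v\equiv 3\pmod{12}$ then $(v-1)/4$ is a half-integer and the largest admissible integer is $\lfloor (v-1)/4\rfloor=(v-3)/4$, again matching the table; note that this automatically yields $s\ge 1$, so the case $s=0$, which would require a resolvable $P_3$-decomposition of $K_v$ and hence $v\equiv 9\pmod{12}$, never arises in this residue class.

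I do not expect a genuine obstacle in this direction, since the content is a divisibility-and-nonnegativity bookkeeping argument. The one point requiring care is to verify that no further necessary condition is hidden in the per-vertex degrees. Fixing a vertex $u$ and letting it be the centre of a path (the vertex $a_1$ in a block $(a_1;a_2,a_3)$) in $c_u$ of the $P_3$-classes, its degree equation reads $2s+(2c_u+(r-c_u))=v-1$, i.e. $c_u=v-1-2s-r$. Substituting $r=3x$ and $s=\frac{v-1}{2}-2x$ yields $c_u=x$ for every vertex $u$, so the admissibility constraints $0\le c_u\le r$ reduce to $0\le x\le 3x$, already implied by $x\ge 0$. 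Hence the degree analysis introduces nothing beyond the edge count, and the necessary conditions coincide exactly with the pairs tabulated in $I(v)$.
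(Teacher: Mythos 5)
Your argument is correct and follows essentially the same route as the paper: the edge count $\frac{v}{3}(2r+3s)=\binom{v}{2}$ forces $3\mid r$, and writing $r=3x$ gives $s=\frac{v-1}{2}-2x$ with the range of $x$ determined by nonnegativity and integrality, exactly as in the paper's proof of Lemma~\ref{lemmaP1}. Your additional per-vertex degree check (showing $c_u=x$ and hence no hidden constraint) is a sound but unneeded supplement that the paper omits.
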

\begin{proof}
For $v$ odd, we have $G=K_{v}$.
Let $D$ be a decomposition of  $ K_{v}$ into $r$ classes  of $P_3$
and $s$ classes of $K_3$ graphs. Counting the edges of
$K_v$ that appear in $D$ we obtain
$$
  \frac{v}{3} \cdot \left( 2r + 3s \right)
    =\frac{v(v-1)}{2} ,
$$
and hence that
\begin{equation}
  2r + 3s = \frac{3}{2} \left( v-1 \right) .
\end{equation}
This equation implies that \
$2r\equiv \frac{3}{2} (v-1)\pmod{3}$ \ and \
$3s\equiv \frac{3}{2} (v-1)\pmod{2}$. Then we obtain
\begin{itemize}
 \item
  $r\equiv0\pmod{3}$ \ and \ $s\equiv1\pmod{2}$ \ for \ $v\equiv 3\pmod{12}$,
 \item
  $r\equiv0\pmod{3}$ \ and \ $s\equiv0\pmod{2}$ \ for \ $v\equiv 9\pmod{12}$.
\end{itemize}
In either case, introducing the notation $x=r/3$,
 the equation $(1)$ determines that $s=\frac{v-1}{2}-2x$ must hold.
Since $r$ and $s$ cannot be negative, and $x$ is an integer, the value of
 $x$ has to be in the range as given in the definition of $I(v)$.
\end{proof}

\begin{lemma}
\label{lemmaP2} Let \ $v\equiv 0\pmod{6}$. If\/ $(r,s)\in
URD(v;P_3,K_3)$ then\/ $(r,s)\in I(v)$.
\end{lemma}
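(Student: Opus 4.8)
The plan is to reproduce the edge-counting argument of Lemma~\ref{lemmaP1}, modified for the fact that $v$ is now even, so that $G=K_v-I$ rather than $K_v$. First I would note that $K_v-I$ has $\frac{v(v-2)}{2}$ edges, obtained from the $\frac{v(v-1)}{2}$ edges of $K_v$ by deleting the $\frac{v}{2}$ edges of the removed $1$-factor $I$. Since a $P_3$-class uses $\frac{2v}{3}$ edges and a $K_3$-class uses $v$ edges, counting the edges of a decomposition $D$ into $r$ path-classes and $s$ triangle-classes yields
\[
  \frac{v}{3}\left(2r+3s\right)=\frac{v(v-2)}{2},
\]
and hence
\[
  2r+3s=\frac{3(v-2)}{2}.
\]

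From here I would extract the congruences as in Lemma~\ref{lemmaP1}. Because $v$ is even, $\frac{3(v-2)}{2}=3\cdot\frac{v-2}{2}$ is a multiple of $3$, so reducing the last equation modulo $3$ forces $r\equiv0\pmod3$; reducing modulo $2$ fixes the parity of $s$, which turns out to be odd for $v\equiv0\pmod{12}$ and even for $v\equiv6\pmod{12}$. Setting $x=r/3$, the equation then gives $s=\frac{v-2}{2}-2x$, and the requirements $r,s\ge0$ with $x\in\ZZ$ confine $x$ to $0\le x\le\frac{v-2}{4}$. Since $\frac{v-2}{4}$ is an integer exactly when $v\equiv6\pmod{12}$, the largest admissible $x$ is $\frac{v-2}{4}$ in that case and $\frac{v-4}{4}$ when $v\equiv0\pmod{12}$; for $v\ge18$ this reproduces exactly the two relevant rows of Table~1.

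It remains to handle $v=6$ and $v=12$, where the linear count admits one pair more than is listed in $I(v)$. In each case the surplus pair is the one with $x=0$, namely $\left(0,\frac{v-2}{2}\right)$, which describes a decomposition of $K_v-I$ into triangle-classes only, i.e.\ an NKTS$(v)$. As recalled in the Introduction, an NKTS$(v)$ exists only for $v\equiv0\pmod6$ with $v>12$, so neither NKTS$(6)$ nor NKTS$(12)$ exists; therefore $\left(0,\frac{v-2}{2}\right)\notin URD(v;P_3,K_3)$ for these two values, and deleting it leaves precisely $I(6)=\{(3,0)\}$ and $I(12)=\{(3,3),(6,1)\}$. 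I expect this last step to be the only non-routine point: the edge equation alone does not suffice for $v\le12$, and one must appeal to the known spectrum of Nearly Kirkman Triple Systems to discard the all-triangle solution.
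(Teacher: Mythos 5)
Your proof is correct and follows essentially the same edge-counting argument as the paper's proof of Lemma~\ref{lemmaP2}: the same equation $2r+3s=\frac{3}{2}(v-2)$, the same congruences forcing $r\equiv0\pmod3$ and the parity of $s$, and the same bound on $x=r/3$. The only difference is that you explicitly discard the all-triangle pair $\left(0,\frac{v-2}{2}\right)$ for $v=6$ and $v=12$ by invoking the nonexistence of NKTS$(6)$ and NKTS$(12)$ --- a step the paper instead delegates to Lemmas~\ref{lemmaD1} and~\ref{lemmaD2} --- which makes your version self-contained for those values and also corrects the paper's typo $s=\frac{v-1}{2}-2x$ (it should read $s=\frac{v-2}{2}-2x$, consistent with Table~1).
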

\begin{proof}
For $v$ even, we have $G=K_{v}-I$.
The argument is similar to the one for $v$ odd.
Let $D$ be a decomposition of  $ K_{v}-I$ into $r$ classes  of $P_3$
and $s$ classes of $3$-cycles. Counting the edges of
$K_v$ that appear in $D$ we obtain
$$
  \frac{v}{3} \cdot \left( 2r + 3s \right)
    =\frac{v(v-2)}{2} ,
$$
and hence that
\begin{equation}
  2r + 3s = \frac{3}{2} \left( v-2 \right) .
\end{equation}
This equation implies that \
$2r\equiv \frac{3}{2} (v-2)\pmod{3}$ \ and \
$3s\equiv \frac{3}{2} (v-2)\pmod{2}$. Then we obtain
\begin{itemize}
 \item
  $r\equiv0\pmod{3}$ \ and \ $s\equiv1\pmod{2}$ \ for \ $v\equiv 0\pmod{12}$,
 \item
  $r\equiv0\pmod{3}$ \ and \ $s\equiv0\pmod{2}$ \ for \ $v\equiv 6\pmod{12}$.
\end{itemize}
In either case, denoting $x=r/3$,
 the equation $(2)$ yields $s=\frac{v-1}{2}-2x$.
Since $r$ and $s$ cannot be negative, and $x$ is an integer, the value of
 $x$ has to be in the range as given in the definition of $I(v)$.
\end{proof}

\section{Small cases}

Here we handle the two exceptional cases, namely $v=6$ and $v=12$,
 for which the set $I(v)$ is slightly more restricted than for larger $v$.

\begin{lemma}
\label{lemmaD1}  $URD(6;P_3,K_3)=\{(3,0)\}$.
\end{lemma}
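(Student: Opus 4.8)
The plan is to combine the arithmetic restrictions already supplied by Lemma~\ref{lemmaP2} with one nonexistence fact and one explicit construction. Since $v=6$ is even, we work with $G=K_6-I$, which has $\binom{6}{2}-3=12$ edges; because every parallel class must cover all six vertices and each block (whether $P_3$ or $K_3$) spans three vertices, each class consists of exactly two blocks, so a $P_3$-class uses $4$ edges and a $K_3$-class uses $6$ edges. First I would invoke the counting and congruence argument of Lemma~\ref{lemmaP2}: with $v=6$ the relation $2r+3s=\frac{3}{2}(v-2)=6$ together with $r\equiv0\pmod3$ and $s$ even leaves only the two candidate pairs $(r,s)=(3,0)$ and $(r,s)=(0,2)$.

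The crux is then to eliminate $(0,2)$. A decomposition realizing $(0,2)$ would split $K_6-I$ into two parallel classes of triangles, i.e.\ a resolvable $K_3$-decomposition of $K_6-I$, which is by definition a Nearly Kirkman Triple System NKTS$(6)$. The result of \cite{RS} quoted in the Introduction asserts that NKTS$(v)$ exists precisely when $v\equiv0\pmod6$ and $v>12$; since $6\not>12$, no NKTS$(6)$ exists, and hence $(0,2)\notin URD(6;P_3,K_3)$. If a self-contained argument is preferred, I would instead note that every triangle of $K_6-I$ is forced to be a transversal of the three non-edges of $I$, so there are only four $K_3$-factors, and a short check shows that any two of them share an edge; consequently two edge-disjoint triangle factors cannot exist.

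It remains to prove $(3,0)\in URD(6;P_3,K_3)$ by exhibiting three edge-disjoint $P_3$-factors whose $3\times4=12$ edges exhaust $K_6-I$. Taking the missing $1$-factor to be $I=\{14,25,36\}$, I would display the three classes $\{(1;2,3),(4;5,6)\}$, $\{(2;4,6),(5;1,3)\}$, and $\{(6;1,5),(3;2,4)\}$, and verify that each class is a pair of vertex-disjoint $P_3$'s covering all six vertices, that together they use each of the twelve edges of $K_6-I$ exactly once, and that none of the forbidden pairs $14,25,36$ occurs. This establishes the reverse inclusion and, combined with the previous two steps, gives $URD(6;P_3,K_3)=\{(3,0)\}$.

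I expect the main obstacle to be the elimination of $(0,2)$: the counting alone leaves it open, and ruling it out genuinely requires the nonexistence of NKTS$(6)$ (or the equivalent direct observation that the four triangle factors pairwise overlap in an edge). The $P_3$-factorization and the arithmetic are routine by comparison, the former being just a matter of writing down one good example and checking it.
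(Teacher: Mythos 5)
Your proof is correct and follows essentially the same route as the paper: the counting/congruence conditions reduce the candidates to $(0,2)$ and $(3,0)$, the pair $(0,2)$ is excluded because it would yield an NKTS$(6)$, which does not exist by \cite{RS}, and an explicit $P_3$-factorization of $K_6-I$ realizes $(3,0)$ (your three classes check out, and your optional self-contained argument via the four pairwise edge-intersecting triangle factors is also valid). No gaps.
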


\begin{proof}
The case $r=0$ would correspond to an NKTS$(6)$, which does not
exist \cite{RS}.
On the other hand, for $r=3$ and $s=0$ we can take the groups to be
$\{0,1\}, \{2,3\}, \{4,5\}$ and the three classes
$\{(0;2,4), (1;3,5)\}$, $\{(2;4,1), (3;5,0)\}$,
$(4;1,3), \{(5;2,0)\}$.
\end{proof}

\begin{lemma}
\label{lemmaD2}  $URD(12;P_3,K_3)=\{(3,3),(6,1)\}$.
\end{lemma}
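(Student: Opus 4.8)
The plan is to prove the two inclusions separately. For necessity, note first that $12\equiv 0\pmod 6$, so the counting argument of Lemma~\ref{lemmaP2} applies: any admissible pair must satisfy the edge-count equation $2r+3s=15$ together with the parity conditions $r\equiv 0\pmod 3$ and $s\equiv 1\pmod 2$. The only nonnegative integer solutions are $(r,s)\in\{(0,5),(3,3),(6,1)\}$, so it remains to exclude $(0,5)$. A decomposition realizing $(0,5)$ would consist of five parallel classes each made up of four triangles partitioning the $12$ vertices; that is exactly a resolvable $K_3$-decomposition of $K_{12}-I$, i.e. a Nearly Kirkman Triple System NKTS$(12)$. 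Since NKTS$(v)$ exists only for $v\equiv 0\pmod 6$ with $v>12$ \cite{RS}, no such system exists, and hence $(0,5)\notin URD(12;P_3,K_3)$. This yields $URD(12;P_3,K_3)\subseteq\{(3,3),(6,1)\}$.

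For sufficiency I would exhibit explicit decompositions realizing $(3,3)$ and $(6,1)$, in the spirit of Lemma~\ref{lemmaD1}. Fix the vertex set $\{0,1,\dots,11\}$ and take the removed $1$-factor to be $I=\{\,\{2i,2i+1\}:0\le i\le 5\,\}$, so that $G=K_{12}-I$ consists of all pairs except these six. For $(6,1)$ the strategy is to choose a single triangle class, namely four triangles partitioning the vertices and using only edges of $G$ (for instance built from the ``even'' and ``odd'' halves $\{0,2,4,6,8,10\}$ and $\{1,3,5,7,9,11\}$), and then to split the remaining $48$ edges into six $P_3$-classes. For $(3,3)$ the strategy is symmetric: select three triangle classes ($36$ edges in total) and decompose the residual $24$ edges into three $P_3$-classes. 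In each case the verification reduces to three checks: that every class partitions the twelve vertices, that all blocks within a class are of the same type, and that the multiset of all edges used is precisely $E(G)$.

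To keep the bookkeeping short I would either generate the classes from a small number of base blocks under a cyclic action or simply list all classes explicitly and check edge-disjointness by hand. The main obstacle is the constructive step, and specifically the interaction between resolvability and uniformity: it does not suffice that the graph left after deleting the triangle classes be $P_3$-decomposable at the edge level, since it must admit a \emph{resolvable} such decomposition in which each class covers all twelve vertices. The delicate point is therefore to choose the triangle class (or classes) so that the residual graph is simultaneously $P_3$-decomposable and $P_3$-resolvable; once suitable triangles are fixed, producing the path classes and confirming the three conditions above is routine.
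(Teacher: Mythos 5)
Your necessity argument is correct and matches the paper's: the counting and parity constraints of Lemma~\ref{lemmaP2} reduce the candidates to $(0,5)$, $(3,3)$, $(6,1)$, and $(0,5)$ is ruled out because it would be an NKTS$(12)$, which does not exist by \cite{RS}. That half is fine.

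The sufficiency direction, however, is not a proof but a plan, and the missing piece is exactly the substantive content of the lemma. You correctly identify the delicate point yourself --- the residual graph after removing the triangle classes must admit not merely a $P_3$-decomposition but a \emph{resolvable} one, with each class covering all twelve vertices --- and then you defer it as ``routine'' without exhibiting a single block. Nothing in your argument certifies that a triangle class built from the even/odd halves (or three such classes for the $(3,3)$ case) actually leaves a $P_3$-resolvable remainder; for small sporadic cases like $v=12$ there is no general lemma to invoke (Lemma~\ref{lemmaC2} explicitly excludes $v=12$, and indeed $I(12)$ is smaller than the generic formula would give, precisely because the generic construction via a $3$-RGDD of type $2^4$ fails here). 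The paper settles this by writing out the classes explicitly: for $(3,3)$, three $P_3$-classes, three $K_3$-classes, and the deleted $1$-factor are listed and can be checked to partition $E(K_{12})$; likewise for $(6,1)$. Until you produce such explicit decompositions (or an equivalent verifiable construction), the inclusion $\{(3,3),(6,1)\}\subseteq URD(12;P_3,K_3)$ remains unproved.
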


\begin{proof}
The case $r=0$ would correspond to an NKTS$(12)$, which does not
exist \cite{RS}.
For the other two cases, the following systems prove the assertion:
\begin{itemize}
\item $(3,3)\in URD(12;P_3,K_3)$:

$\{(1;6,a)$, $(8;0,2)$, $(3;4,9)$, $(7;5,b)\}$, \
$\{(4;7,1)$, $(5;2,b)$, $(6;8,3)$, $(9;0,a)\}$,\\
$\{(0;4,5)$, $(a;6,8)$, $(b;1,3)$, $(2;7,9)\}$; \
$\{(1,2,3)$, $(4,5,6)$, $(7,8,9)$, $(0,a,b)\}$,\\
$\{(1,5,9)$, $(4,8,b)$, $(3,7,a)$, $(2,6,0)\}$, \
$\{(1,7,0)$, $(2,4,a)$, $(3,5,8)$, $(6,9,b)\}$; \\
 $I=\{(1,8)$, $(2,b)$, $(3,0)$, $(4,9)$, $(5,a)$, $(6,7)\}$.

\item $(6,1)\in URD(12;P_3,K_3)$:

$\{(1;4,7)$, $(5;8,0)$, $(9;2,b)$, $(a;3,6)\}$, \
$\{(2;6,8)$, $(4;9,a)$, $(7;3,0)$, $(b;1,5)\}$,\\
$\{(0;4,2)$, $(3;5,9)$, $(6;7,b)$, $(8;1,a)\}$, \
$\{(1;5,6)$, $(4;8,7)$, $(9;0,a)$, $(b;3,2)\}$, \\
$\{(2;4,5)$, $(6;9,8)$, $(7;a,b)$, $(0;1,3)\}$, \
$\{(3;4,6)$, $(5;7,9)$, $(8;0,b)$, $(a;1,2)\}$;\\
$\{(1,2,3)$, $(4,5,6)$, $(7,8,9)$, $(0,a,b)\}$; \\
 $I=\{(1,9)$, $(2,7)$, $(3,8)$, $(4,b)$, $(5,a)$, $(6,0)\}$.
\end{itemize}

\vspace{-4ex}
\end{proof}

\section{Constructions for general $v$}

The key tool in this section is the following important lemma.
At the end of the paper we give some related information in the
 ``Historical remarks and acknowledgements''.

\begin{lemma}
\label{lemmaC}
 Let \ $v\equiv 0\pmod{3},$ $v\geq9$.
 The union of any two edge-disjoint
parallel classes of\/ $3$-cycles of\/ $K_v$ can be decomposed into three
 parallel classes of\/ $P_3$.
\end{lemma}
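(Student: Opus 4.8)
The plan is to recast the desired decomposition as a colouring-and-orientation problem on the union graph. Write $R$ and $B$ for the two given parallel classes of $3$-cycles (think of them as \emph{red} and \emph{blue} triangles) and let $H=R\cup B$. Since the two classes are edge-disjoint and each covers every vertex exactly once, $H$ is a $4$-regular simple graph in which every vertex lies in exactly one red triangle and exactly one blue triangle, its four incident edges being pairwise distinct. A short count gives $|E(H)|=2v$, which is exactly the number of edges carried by three parallel classes of $P_3$ (each class having $v/3$ paths, i.e.\ $2v/3$ edges); so the edge totals match, and the task is to partition $E(H)$ into three $P_3$-factors.

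First I would record the degree bookkeeping any such decomposition must obey. In a $P_3$-factor every vertex is either the centre of a path (degree $2$) or an endpoint (degree $1$), and it must appear in each of the three classes; since the three local degrees sum to $\deg_H=4$, the only possibility is $2+1+1$. Hence each vertex is the centre of a path in \emph{exactly one} class, which assigns to every vertex a colour $c(x)\in\{1,2,3\}$, namely the index of the class in which it is a centre. Moreover every edge of a $P_3$ is incident to the centre of its path, so for an edge $xy$ lying in class $f$ one of $x,y$ is a centre in class $f$; this forces $c(x)\neq c(y)$. Thus $c$ is necessarily a \emph{proper} $3$-colouring of $H$, and since red and blue triangles are cliques, every triangle must receive all three colours.

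The crux is therefore to produce such a rainbow colouring, and this is where I expect the real work to sit. I would build the bipartite pattern graph $M$ whose two vertex classes are the red triangles and the blue triangles, with one edge for each vertex $x$ of $K_v$ joining the red and the blue triangle through $x$. Because two triangles of different colours meet in at most one vertex (they are edge-disjoint), $M$ is a \emph{simple} graph, and it is $3$-regular since every triangle contains three vertices lying in three distinct triangles of the other colour. By K\"onig's edge-colouring theorem every bipartite graph satisfies $\chi'=\Delta$, so $M$ admits a proper $3$-edge-colouring; reading the colour of the edge $x$ of $M$ as the colour $c(x)$ of the vertex $x$ gives precisely a colouring in which each red and each blue triangle is rainbow. (The hypothesis $v\ge 9$ plays no role in the argument itself; it only guarantees that two edge-disjoint triangle-parallel-classes can exist, making the statement non-vacuous.)

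It remains to realise the factors, which I expect to be routine once the colouring is in hand. For $i<j$ let $H_{ij}$ be the subgraph of $H$ spanned by the edges joining a colour-$i$ vertex to a colour-$j$ vertex. Each colour-$i$ vertex has exactly one colour-$j$ neighbour in its red triangle and one in its blue triangle, so $H_{ij}$ is $2$-regular, i.e.\ a disjoint union of (even) cycles. I would orient every such cycle cyclically, giving each of its vertices in-degree $1$ and out-degree $1$; doing this in all three graphs $H_{12},H_{13},H_{23}$ yields an orientation of $H$ in which every vertex $x$ has out-degree $2$ and receives exactly one in-edge from each of the two colours different from $c(x)$. Finally define class $i$ to consist, for every colour-$i$ vertex $x$, of the path $(x;p,q)$ where $p,q$ are the two out-neighbours of $x$ (necessarily of the two other colours, hence distinct). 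Each edge is oriented out of exactly one of its endpoints, so it lies in exactly one class; and in class $i$ the colour-$i$ vertices are the centres, while every other vertex, having precisely one in-edge from a colour-$i$ centre, is an endpoint exactly once. Hence each class is a genuine parallel class of $P_3$'s and the three classes partition $E(H)$, as required.
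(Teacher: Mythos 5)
Your proof is correct, and its central ingredient is the same as the paper's: the $3$-regular bipartite intersection graph of the two triangle classes (your $M$, the paper's $B$) together with K\"onig's theorem to obtain a proper $3$-edge-colouring. Where you diverge is in how the three $P_3$-factors are extracted from that colouring. The paper stays inside $B$, observing that the union graph is the line graph of $B$, and defines each factor as the set of $P_4$'s of $B$ whose colour sequence, read from $X'$, is a fixed cyclic shift of $(a,b,c)$; the three shifts then cover each intersecting edge-pair of $B$ exactly once, with no choices left to make. You instead pull the edge-colouring of $M$ back to a proper rainbow vertex $3$-colouring of the union graph $H$, form the $2$-regular bipartite subgraphs $H_{ij}$, and orient their cycles to decide which endpoint of each edge serves as the centre of the path using it. Both finishes are sound; the paper's is canonical and slightly shorter, while yours buys two things the paper does not state: a clean necessity argument (the $2+1+1$ degree split forces every valid decomposition to come from such a rainbow colouring, so the construction is in a sense the only possible one) and an explicit count showing the edge totals match. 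Your parenthetical that $v\ge 9$ plays no role in the argument is also correct.
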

\begin{proof}
Let $Q'=\{q'_1,\dots,q'_{v/3}\}$ and $Q''=\{q''_1,\dots,q''_{v/3}\}$
 be two edge-disjoint parallel classes of $K_3$,
 whose union composes the edge set of graph $G$ on $v$ vertices.
  We represent the intersection structure of $Q'$ and $Q''$ with
   a bipartite graph $B$ with vertex bipartition $X'\cup X''$,
  where $|X'|=|X''|$ = $v/3$ and each vertex $x'_i \in X'$ and $x''_j \in X''$
  for $1 \leq  i, j \leq v/3$  corresponds to a block
   $q'_i \in Q'$ and $q''_j \in Q''$,
 respectively.
Vertex  $x'_i$ is connected to vertex $x''_j$ by an edge of $B$
  if their corresponding blocks $q'_i$ and $q''_i$ have a vertex in common.

Every block of $Q'$ ($Q''$) meets exactly three
 distinct blocks of $Q''$ ($Q'$) because each vertex appears in precisely one
  block of $Q'$ and also of $Q''$, and no vertex pair of $G$ is contained
  in blocks of both classes.
  Thus, $B$ is a $3$-regular bipartite graph.\footnote{In fact, $B$ is
  the hypergraph-theoretic dual of the 2-regular 3-uniform hypergraph whose
  hyper\-edges are the triples in $Q'\cup Q''$.}
Moreover, the edges of $B$ are in one-to-one correspondence with the vertices of $G$,
 and $G$ is the line graph of $B$.
We are going to define three edge decompositions of $B$, each of them being
 the union of $v/3$ mutually edge-disjoint copies of $P_4$ starting in $X'$ and
 ending in $X''$, in such a way that each intersecting edge-pair of $B$ occurs
 together in precisely one of those $3\times v/3$ copies of $P_4$.
Since $G$ is the line graph of $B$, this will yield the three parallel classes
  of $P_3$ as required.

It follows from the K\"onig-Hall theorem \cite{B} that the edge set
 of $B$ can be decomposed into three edge-disjoint perfect matchings;
  we view this as a proper 3-edge-coloring
  with three colors, say colors $a$, $b$, and $c$.
We define
 \begin{itemize}
  \item ${\cal P}_{abc} = \{$paths $P_4$ in $B$, starting in $X'$,
    whose color sequence is $(a,b,c)$ in this order$\}$.
 \end{itemize}
This ${\cal P}_{abc}$ is well-defined and yields an edge decomposition
 of $B$ indeed, because each color class is a perfect matching.
We define ${\cal P}_{bca}$ and ${\cal P}_{cab}$ analogously, replacing
 the sequence $(a,b,c)$ with $(b,c,a)$ and $(c,a,b)$, respectively.

It is easy to verify that the three edge decompositions
 ${\cal P}_{abc}$, ${\cal P}_{bca}$, ${\cal P}_{cab}$ of $B$
 satisfy the requirements.
For example, if an edge $e_a$ of color $a$ meets an edge $e_c$
 of color $c$ in $B$,
 then they are consecutive in one $P_4$ of ${\cal P}_{bca}$
 if $e_a\cap e_c\in X'$ or in one $P_4$ of ${\cal P}_{cab}$
 if $e_a\cap e_c\in X''$ (and they are not consecutive in any other
 $P_4$ of ${\cal P}_{abc}\cup{\cal P}_{bca}\cup{\cal P}_{cab}$).
\end{proof}

\begin{lemma}
\label{lemmaC1} For every \ $v\equiv 3\pmod{6}$, $I(v)\subseteq
URD(v;P_3,K_3)$.
\end{lemma}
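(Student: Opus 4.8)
The plan is to realize every admissible pair by starting from a pure $3$-cycle resolution and then repeatedly trading two parallel classes of $K_3$ for three parallel classes of $P_3$ via Lemma~\ref{lemmaC}. Since $v\equiv 3\pmod 6$ forces $v$ odd, here $G=K_v$, and by Table~1 every element of $I(v)$ has the form $(r,s)=\left(3x,\ \frac{v-1}{2}-2x\right)$, with $x$ running over $0,1,\dots,x_{\max}$, where $x_{\max}=\frac{v-3}{4}$ if $v\equiv 3\pmod{12}$ and $x_{\max}=\frac{v-1}{4}$ if $v\equiv 9\pmod{12}$. In both subcases one checks that $x_{\max}=\lfloor\frac{v-1}{4}\rfloor$, so $2x_{\max}\le\frac{v-1}{2}$.

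The base case $x=0$ is the pair $\left(0,\frac{v-1}{2}\right)$, which is exactly a resolvable $K_3$-decomposition of $K_v$, i.e.\ a Kirkman Triple System KTS$(v)$. As recalled in the Introduction, such a system exists for every $v\equiv 3\pmod 6$; hence $\left(0,\frac{v-1}{2}\right)\in URD(v;P_3,K_3)$, and its $\frac{v-1}{2}$ parallel classes of $3$-cycles are pairwise edge-disjoint because together they partition $E(K_v)$.

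To obtain a general pair, I would fix a KTS$(v)$ and, for each $x$ in the allowed range, select $2x$ of its $K_3$-classes and group them into $x$ disjoint pairs. Applying Lemma~\ref{lemmaC} to each pair decomposes the union of those two $3$-cycle classes into three parallel classes of $P_3$; since distinct pairs use disjoint sets of edges, the resulting $3x$ path-classes are mutually edge-disjoint and also edge-disjoint from the $\frac{v-1}{2}-2x$ untouched $K_3$-classes. Each produced class is a genuine parallel class (every vertex lies in exactly one block), so the whole collection is a uniformly resolvable decomposition of $K_v$ realizing $\left(3x,\frac{v-1}{2}-2x\right)$. Letting $x$ range over $0,\dots,x_{\max}$ yields every member of $I(v)$.

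The only point requiring care is the hypothesis $v\ge 9$ of Lemma~\ref{lemmaC}. For $v=3$ the lemma is unavailable, but there $I(3)=\{(0,1)\}$ and the single class is $K_3$ itself, so the claim is trivial; for all larger $v\equiv 3\pmod 6$ we have $v\ge 9$ and the inequality $2x_{\max}\le\frac{v-1}{2}$ guarantees that we never run short of $K_3$-classes to convert. Because the genuinely nontrivial work — turning $3$-cycle classes into $P_3$ classes — is entirely carried out by Lemma~\ref{lemmaC}, I expect no serious obstacle here: the argument reduces to a bookkeeping of how many conversions are performed, matched against the range of $x$ prescribed in Table~1.
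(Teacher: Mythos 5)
Your proof is correct and follows essentially the same route as the paper: start from the $\frac{v-1}{2}$ parallel classes of a KTS$(v)$, pair up $2x$ of the triangle classes, and convert each pair into three $P_3$-classes via Lemma~\ref{lemmaC}, with the range check $2x_{\max}\le\frac{v-1}{2}$ matching the bounds in Table~1. Your explicit treatment of $v=3$ (where $x=0$ is the only case and Lemma~\ref{lemmaC} is never invoked) is a small point of care the paper leaves implicit.
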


\begin{proof}
Let $R_1$, $R_2$,\ldots ,$R_{\frac{v-1}{2}}$ be the parallel classes
of a resolvable KTS$(v)$. Define
 $$S_i= R_{2i+1}\cup  R_{2i+2}, \qquad i=0,1,\ldots ,\frac{v-7}{4}$$
 for \ $v\equiv 3\pmod{12}$, and
 $$T_i= R_{2i+1}\cup  R_{2i+2}, \qquad i=0,1,\ldots,\frac{v-5}{4}$$
 for \ $v\equiv 9\pmod{12}$.

By Lemma \ref{lemmaC} we know that each $S_i$ and each $T_i$ can be
 decomposed into three parallel classes of $P_3$.
Thus, in order to generate a member
  $(r,s) = (3x,\frac{v-1}{2}-2x)$ of $I(v)$,
 we apply the lemma to $(S_0,S_1,\dots,S_{x-1})$
  or to $(T_0,T_1,\dots,T_{x-1})$, depending on the
 residue of $v$ modulo 12.
The range given above for $i$ covers the entire range of $x$ in $I(v)$.
\end{proof}

\begin{lemma}
\label{lemmaC2} For every \ $v\equiv 0\pmod{6}\geq18$,
$I(v)\subseteq URD(v;P_3,K_3)$.
\end{lemma}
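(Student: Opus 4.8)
The plan is to mirror the construction of Lemma~\ref{lemmaC1}, replacing the Kirkman Triple System by a Nearly Kirkman Triple System. Since we assume $v\equiv 0\pmod 6$ and $v\geq 18$, in particular $v>12$, a resolvable $K_3$-decomposition of $K_v-I$ (that is, an NKTS$(v)$) exists by \cite{RS}. Counting edges, $K_v-I$ has $\frac{v(v-2)}{2}$ edges and each parallel class of $3$-cycles uses $v$ of them, so such a system consists of exactly $\frac{v-2}{2}$ parallel classes $R_1,R_2,\dots,R_{(v-2)/2}$ of $K_3$. This already realizes the member $(r,s)=(0,\frac{v-2}{2})$ of $I(v)$, corresponding to $x=0$.

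First I would observe that Lemma~\ref{lemmaC} applies verbatim to these classes. Its proof uses only that the two input classes are edge-disjoint parallel classes of $3$-cycles on $v$ vertices; since $K_v-I\subseteq K_v$, any two of the $R_j$ are such classes, and the hypotheses $v\equiv 0\pmod 3$, $v\geq 9$ certainly hold here. Hence each union $R_{2i+1}\cup R_{2i+2}$ can be decomposed into three parallel classes of $P_3$. To realize the member $(r,s)=(3x,\frac{v-2}{2}-2x)$, I would then convert the first $x$ consecutive pairs, namely $R_{2i+1}\cup R_{2i+2}$ for $i=0,1,\dots,x-1$, into $3x$ parallel classes of $P_3$, while leaving the remaining $\frac{v-2}{2}-2x$ classes untouched as $K_3$-classes. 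This produces precisely the required numbers of classes of each type.

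The only point needing care is the range of $x$, which is governed by the parity of the number $\frac{v-2}{2}$ of available $K_3$-classes. For $v\equiv 0\pmod{12}$ this number is odd, so at most $\frac{v-4}{4}$ disjoint pairs can be formed and exactly one $K_3$-class is necessarily left over, matching the range $x=0,1,\dots,\frac{v-4}{4}$ prescribed by Table~1. For $v\equiv 6\pmod{12}$ the number $\frac{v-2}{2}$ is even, so all classes can be paired and fully converted when desired, giving the range $x=0,1,\dots,\frac{v-2}{4}$, again exactly as in Table~1. This case distinction is essentially the only obstacle: the heavy lifting is done by Lemma~\ref{lemmaC} and by the known existence of NKTS$(v)$ for $v\geq 18$, and once both ranges are checked to coincide with those defining $I(v)$, the inclusion $I(v)\subseteq URD(v;P_3,K_3)$ follows.
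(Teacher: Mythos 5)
Your proof is correct and follows essentially the same route as the paper: start from an NKTS$(v)$ (a $3$-RGDD with groups of size $2$), which decomposes $K_v-I$ into $\frac{v-2}{2}$ parallel classes of triangles, and then convert chosen pairs of triangle classes into triples of $P_3$-classes via Lemma~\ref{lemmaC}. If anything, your version is more explicit than the paper's brief sketch, in particular in verifying the ranges of $x$ for the two residues of $v$ modulo $12$.
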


\begin{proof}
Start with a A $3$-RGDD of type $2^{v/3}$ \cite{RS}. This
gives that $K_v-I$ can be decomposed into $\frac{v}{3}-1$ parallel
classes of triples. Now the result can be easily obtained by using
an argument similar to the proof of Lemma \ref{lemmaC1}.
\end{proof}

\section{Conclusion}

We are now in a position to prove the main result of the paper.

\begin{thm}
For every \ $v\equiv 0\pmod{3}$,
 we have\/ $URD(v;P_3,K_3)=I(v)$.
\end{thm}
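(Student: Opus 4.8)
The plan is to assemble the Main Theorem from the pieces already established, matching each residue class and each exceptional value of $v$ to the appropriate lemma. The necessary conditions, i.e.\ the containment $URD(v;P_3,K_3)\subseteq I(v)$, are covered by Lemmas~\ref{lemmaP1} and~\ref{lemmaP2} for $v\equiv3\pmod6$ and $v\equiv0\pmod6$ respectively; together these handle every $v\equiv0\pmod3$. So the substantive work for the theorem is to verify the reverse containment $I(v)\subseteq URD(v;P_3,K_3)$ for all admissible $v$, and then to glue the two inclusions into the claimed equality.

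First I would dispose of the two small exceptional cases. For $v=6$ and $v=12$, the sets $I(v)$ are given their restricted forms in Table~1, and Lemmas~\ref{lemmaD1} and~\ref{lemmaD2} establish the exact equalities $URD(6;P_3,K_3)=\{(3,0)\}$ and $URD(12;P_3,K_3)=\{(3,3),(6,1)\}$ by explicit construction. These account for the last two rows of the table and must be stated separately because, for these two values, the generic sufficiency lemmas do not apply (indeed the $s=0$ endpoint, which would require an NKTS$(6)$ or NKTS$(12)$, is excluded).

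Next I would treat the generic ranges $v\ge18$. Here the containment $I(v)\subseteq URD(v;P_3,K_3)$ splits according to parity of $v$. For $v\equiv3\pmod6$ (the odd case, $G=K_v$), Lemma~\ref{lemmaC1} supplies every pair $(3x,\tfrac{v-1}{2}-2x)$ in the stated range, built from a KTS$(v)$ by converting $x$ pairs of triple-classes into $3x$ path-classes via the conversion Lemma~\ref{lemmaC}. For $v\equiv0\pmod6$, $v\ge18$ (the even case, $G=K_v-I$), Lemma~\ref{lemmaC2} does the analogous job starting from a $3$-RGDD of type $2^{v/3}$. Combining each of these sufficiency lemmas with the matching necessity lemma yields $URD(v;P_3,K_3)=I(v)$ on each residue class.

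The main obstacle is really bookkeeping rather than a new mathematical idea: one must confirm that the index ranges in Table~1 for each of the four residue classes mod~$12$ line up exactly with the ranges of $x$ produced in Lemmas~\ref{lemmaC1} and~\ref{lemmaC2}, so that no pair in $I(v)$ is missed and none outside it is claimed. In particular the endpoints deserve a careful look: for $v\equiv3,9\pmod{12}$ the largest value of $x$ corresponds to the all-triples or almost-all-triples resolution coming directly from the (Nearly) Kirkman system with no application of Lemma~\ref{lemmaC}, while the smallest yields the maximal number of path-classes. Once these ranges are checked against the table and the two small cases are cited, the equality $URD(v;P_3,K_3)=I(v)$ holds for every $v\equiv0\pmod3$, which is exactly the assertion of the theorem.
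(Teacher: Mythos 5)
Your proposal is correct and follows exactly the same route as the paper's own proof: necessity from Lemmas~\ref{lemmaP1} and~\ref{lemmaP2}, sufficiency from the small-case Lemmas~\ref{lemmaD1} and~\ref{lemmaD2} together with the general constructions of Lemmas~\ref{lemmaC1} and~\ref{lemmaC2}. The additional remarks on matching the index ranges to Table~1 are a sensible (and accurate) elaboration of the same argument.
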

\begin{proof}
Necessity follows from Lemmas \ref{lemmaP1} and \ref{lemmaP2}.
Sufficiency follows from Lemmas \ref{lemmaD1}, \ref{lemmaD2},
\ref{lemmaC1} and \ref{lemmaC2}. This completes the proof.
\end{proof}

\paragraph{Historical remarks and acknowledgements.}

This research was done in the summer of 2012, when the second author
 visited the University of Catania.
After the presentation of
 our results at the Seventh Czech-Slovak International Symposium
 on Graph Theory, Combinatorics, Algorithms and Applications
 (Ko\v sice, July 2013),
 we learned from Alex Rosa that Lemma \ref{lemmaC} was first
 proved by Rick Wilson.
Later, Wilson informed us that he never published the lemma, but it
 was mentioned with full credit to him in a paper by John van Rees
 \cite{W}.
We thank professors Rosa and Wilson for these pieces of information.

\end{document}